\newtheorem{thm}{Theorem}
\newtheorem{lemma}[thm]{Lemma}
\newtheorem{prop}[thm]{Proposition}
\newtheorem{obs}[thm]{Observation}
\newtheorem{assum}{Assumption}
\theoremstyle{remark} 
\theoremstyle{remark} 
\newtheorem*{rem*}{Remark}
\theoremstyle{remark} 
\theoremstyle{remark} 
\newtheorem*{rems*}{Remarks}
\newcommand{\be}[1]{\begin{equation}\label{#1}}
\newcommand{\ee}{\end{equation}}
\newcommand{\ba}[1]{\begin{align}\label{#1}}
\newcommand{\ea}{\end{align}}
\newcommand{\ben}{\begin{equation*}}
\newcommand{\een}{\end{equation*}}
\newcommand{\ban}{\begin{align*}}
\newcommand{\ean}{\end{align*}}
\def\mik{1}
\newcommand\cpsfrag[2]{\ifnum\mik=1\psfrag{#1}{#2}\fi}
\newcommand{\calA}{\mathcal{A}}
\newcommand{\calB}{\mathcal{B}}
\newcommand{\calC}{\mathcal{C}}
\newcommand{\calD}{\mathcal{D}}
\newcommand{\calH}{\mathcal{H}}
\newcommand{\calO}{\mathcal{O}}
\newcommand{\calR}{\mathcal{R}}
\newcommand{\calV}{\mathcal{V}}
\newcommand\La{\Lambda}      
\newcommand{\Ball}{\La}  
\newcommand{\PP}{\mathbb{P}}     
\newcommand{\EE}{\mathbb{E}}     
\newcommand\resp{respectively}
\newcommand{\NN}{\mathbb{N}}     
\newcommand{\ZZ}{\mathbb{Z}}     
\newcommand{\hide}[1]{}
\title[]{Large deviation bounds for the volume of the largest cluster in
2D critical percolation}
\date{\today}
\author[D.~Kiss]{Demeter Kiss}
\thanks{Statistical Laboratory, Centre for Mathematical Sciences,
University of Cambridge and Advanced Institute for Materials Research, Tohoku University, Sendai}
\begin{document}

\begin{abstract}
  Let $M_n$ denote the number of sites in the largest cluster in critical site percolation on the triangular lattice inside a box side length $n$. We give lower and upper bounds on the probability that $M_n / \EE M_n > x$ of the form $\exp(-Cx^{2/\alpha_1})$ for $x \geq 1$ and large $n$ with $\alpha_1 = 5/48$ and $C>0$. Our results extend to other two dimensional lattices and strengthen the previously known exponential upper bound derived by Borgs, Chayes, Kesten and Spencer \cite{BCKS99}. Furthermore, under some general assumptions similar to those in \cite{BCKS99}, we derive a similar upper bound in dimensions $d > 2$. 
\end{abstract}

\thanks{2010 Mathematics Subject Classification: 60K35 and 82B43}

\maketitle

\section{Introduction and statement of the main results}

For a general introduction to the percolation model we refer to
 \cite{Kesten1982}, \cite{Grimmett1999}, and \cite{Bollobas2006a}. Consider the
critical bond percolation model on the lattice $\mathbb{Z}^d$ for $d\geq 2$. 
For $n\in\mathbb{N}$ let 
\[
  \Ball_n:= \{-n,-n+1,\ldots, n\}^d
\]
denote the hypercube (ball) centred at the origin with radius $n$. For $v\in
V(\mathbb{T})$ we write $\Lambda_n(v) := v + \Lambda_n$. Further let
$\partial A$ denote the (outer) boundary of $A\subseteq\ZZ^d$, that is
\[
  \partial A := \left\{v \in \ZZ^d\setminus A: \exists u\in A \text{ such that } u\sim v\right\}.
\]

We say that two sites $v,w$ are connected by an open path and denote it by
$v\leftrightarrow w$ where there is a sequence of open edges which starts
with $v$, ends with $w$, and the consecutive vertices edges share a vertex. Let
$v\xleftrightarrow{S} w$ denote the event where there is an open path connecting $v$
to $w$ which only uses vertices in $S\subseteq \ZZ^d$. For $A,B\subseteq \ZZ^d$, $A \xleftrightarrow{S} B$ denotes
the event where there are vertices $v\in A, w\in B$ such that $v\xleftrightarrow{S} w$. When $S$ is omitted, it is assumed to
be equal to $\ZZ^d$.

The open cluster of the vertex $v$ in $\Ball_n$ is denoted by
\[
  \calC_n(v) := \left\{ w\in \Ball_n\, | \, w\xleftrightarrow{\Ball_n}
v\right\}.
\]
Herein the size of a cluster is measured by its number of vertices. Further, let
$\calC_n^{(i)}$ denote the $i$th largest cluster in $\Lambda_n$. For $m\leq n$ we write
$\pi(m,n)$ for the probability $\mathbb{P}_{p_c}\left(\partial\Ball_m\leftrightarrow
\partial \Lambda_n \right)$. We set $\pi(n):=\pi(1,n)$. We will work under the following assumptions.

\begin{assum}[Quasi-multiplicativity] \label{assu:q-mult}
  There exists a constant $C_1$ such that for all $0\leq k\leq l\leq m$ we have
  \begin{align}
    \pi(k,l)\pi(l,m) \leq C_{1} \pi(k,m). \label{eq:quasi}
  \end{align}  
\end{assum}

\begin{assum} \label{assu:1arm}
  There exist constants $C_{2}>0$ and $\alpha < d$ such that for all $n\geq m \geq1$
  \begin{align}
    \frac{\pi(n)}{\pi(m)}\geq C_{2}\left(\frac{n}{m}\right)^{-\alpha}. \label{eq:exponent_bound}
  \end{align}  
\end{assum}

Assumption \ref{assu:q-mult} and \ref{assu:1arm} hold for $d = 2$, as proved in \cite{Grimmett1999} and \cite{Nolin2008}.
Furthermore, Assumption \ref{assu:1arm} holds in high ($d\geq 19$) dimensions, however, we do
not expect Assumption \ref{assu:q-mult} to hold in this case. See Remark \ref{rem:high_d}) below for more details on this case. To our knowledge, it is
an open question whether any of Assumption \ref{assu:q-mult} or \ref{assu:1arm} is satisfied in dimensions $3\leq d\leq 18$.

In \cite{BCKS99} the following bound was given:
\begin{thm}[Proposition 6.3 of \cite{BCKS99}]\label{thm:BCKS} Suppose that Assumption \ref{assu:1arm} holds. Then there exist positive constants
$c_1,c_2$ such that for all $x,n\geq 0$,
\begin{align} \label{eq:BCKS}
  \mathbb{P}_{p_c}\left(|\calC_n^{(1)}|\geq xn^d\pi(n)\right) \leq
c_1\exp(-c_2x).
\end{align}
\end{thm}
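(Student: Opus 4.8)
The plan is to convert the upper-tail event for the volume of the largest cluster into a \emph{disjoint occurrence} of many moderately large clusters, and then to exploit independence via the van den Berg--Kesten (BK) inequality. Write $M_n := |\calC_n^{(1)}|$ and fix a reference scale $s_0$, to be chosen as a constant multiple of $n^d\pi(n)$. The key deterministic observation is a chopping step: any open connected set of at least $t$ vertices contains at least $k := \lfloor t/(2s_0)\rfloor$ pairwise vertex-disjoint open connected subsets, each of at least $s_0$ vertices (peel off connected pieces of size in $[s_0,2s_0)$ from a spanning tree). Consequently
\[
  \{M_n \ge t\} \subseteq \{M_n \ge s_0\}^{\circ k},
\]
where $A^{\circ k}$ denotes the disjoint occurrence of $k$ copies of the increasing event $A$. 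Since $\{M_n\ge s_0\}$ is increasing and the configuration is a product measure, the BK inequality yields
\[
  \PP_{p_c}(M_n \ge t) \le \PP_{p_c}(M_n \ge s_0)^{k}.
\]

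It therefore suffices to find a constant $c_0$ (independent of $n$) and to set $s_0 := c_0\, n^d\pi(n)$ so that the base probability $q := \PP_{p_c}(M_n \ge s_0)$ is bounded away from $1$, say $q\le 1/2$. I would obtain this from a first--moment estimate $\EE_{p_c} M_n \le C\, n^d\pi(n)$ together with Markov's inequality, which gives $q \le C/c_0 \le 1/2$ once $c_0 \ge 2C$. The first--moment bound is the standard two-sided scaling of the maximal cluster: bounding $M_n$ by the number of vertices lying in clusters that reach a macroscopic scale and summing the associated connection probabilities over dyadic scales, one is left with a geometric sum dominated by its top scale. Here quasi-multiplicativity (Assumption~\ref{assu:q-mult}) and the uniform positivity of one--arm crossings of annuli of bounded modulus let one compare $\pi(\varepsilon n)$ with $\pi(n)$ up to a constant, while Assumption~\ref{assu:1arm} guarantees $\alpha<d$, so that the sum does not blow up and, in particular, $n^d\pi(n)\to\infty$.

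Combining the two displays with $t = x\, n^d\pi(n)$ gives, in the nontrivial range,
\[
  \PP_{p_c}(M_n \ge x\,n^d\pi(n)) \le q^{\lfloor x/(2c_0)\rfloor} \le c_1 \exp(-c_2 x),
\]
with $c_2 = \log(1/q)/(2c_0)$ and $c_1$ absorbing the floor for small $x$; the bound is trivially valid once $x\,n^d\pi(n)$ exceeds $|\Lambda_n|$, which by Assumption~\ref{assu:1arm} (through $\pi(n)\ge C_2 n^{-\alpha}$) happens for $x$ of order at most $n^{\alpha}$, so a single pair $(c_1,c_2)$ works uniformly in $n$. The main obstacle is the base estimate: one must show $\PP_{p_c}(M_n \ge c_0\, n^d\pi(n)) \le 1/2$ for a \emph{fixed} $c_0$, because the normalization forces the reference scale $s_0$ to be at most a constant times $n^d\pi(n)$. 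A naive reduction of the volume condition to a one--arm (radius) event is too lossy, since a cluster of volume $c_0\, n^d\pi(n)$ need only have radius of order $n\,\pi(n)^{1/d}\ll n$; hence the first--moment input must be controlled at the level of volumes rather than radii. Once this base bound is secured, the BK chopping mechanism supplies the exponential decay essentially for free.
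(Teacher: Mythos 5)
Your BK reduction is sound as far as it goes, but it is not a proof of the theorem, and it is worth saying first that it is in any case not the route the paper takes: the paper does not prove Theorem \ref{thm:BCKS} at all (it quotes it from \cite{BCKS99}), and the machinery it develops for its stronger results goes through moment bounds $\EE_{p_c}\binom{|\calV_m|}{k}$ for the arm-counts $\calV_m$ at every scale, converted into exponential tail bounds via Markov's inequality applied to $t^{|\calV_m|}$. Your chopping step is correct (with the caveat that peeling subtrees off a spanning tree in a degree-$2d$ graph produces pieces of size in $[s_0, C_d s_0]$, so $k$ is of order $t/(C_d s_0)$ rather than $t/(2s_0)$), vertex-disjoint connected open pieces do give disjoint witnesses of the increasing event $\{|\calC_n^{(1)}|\ge s_0\}$, and BK then yields $\PP_{p_c}(|\calC_n^{(1)}|\ge t)\le q^{k}$ with $q=\PP_{p_c}(|\calC_n^{(1)}|\ge s_0)$. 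This is a genuinely different, and attractive, mechanism for upgrading a constant-probability bound to exponential decay.

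The gap is the base estimate $q\le 1/2$ with $s_0=c_0\,n^d\pi(n)$ for a constant $c_0$ independent of $n$: this is not a routine input but the entire content of the theorem, and the first-moment route you sketch provably cannot deliver it. To see the obstruction concretely, decompose according to the diameter of the largest cluster. If the diameter lies in $[2^{-j-1}n,2^{-j}n)$, the cluster fits in one of order $2^{dj}$ boxes of side $2^{-j+1}n$ and each of its vertices $v$ satisfies $v\lra\partial\Lambda_{2^{-j-2}n}(v)$; Markov's inequality for the number of such vertices in a given box gives a bound of order $c_0^{-1}2^{-j(d-\alpha)}$ by Assumption \ref{assu:1arm}, and the union bound over boxes leaves $c_0^{-1}2^{j\alpha}$. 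Summing over the admissible scales, i.e.\ down to $2^{-j}n$ of order $\bigl(c_0 n^d\pi(n)\bigr)^{1/d}$, produces a total of order $c_0^{-1-\alpha/d}\pi(n)^{-\alpha/d}$, which diverges as $n\to\infty$. So first moments of arm-counts lose an unbounded factor — this is exactly the "too lossy" issue you flag in your last paragraph, but it applies equally to your proposed $\EE_{p_c}|\calC_n^{(1)}|\le C n^d\pi(n)$, which is itself equivalent in difficulty to the base estimate and does not follow from "a geometric sum dominated by its top scale" (the sum runs the wrong way: $\pi(2^{-j}n)$ grows as $j$ increases). Repeating the computation with $k$-th moments in place of first moments shows one needs moments of $|\calV_m|$ of order $k>d/(d-\alpha)$, uniformly over scales $m\le n$ — precisely \cite[Lemma 6.1]{BCKS99}, or Theorem \ref{thm:contr} here — before any reduction (yours or the exponential-Chebyshev one) can close. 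A minor additional mismatch: the statement assumes only Assumption \ref{assu:1arm}, while your sketch of the missing estimate invokes quasi-multiplicativity (Assumption \ref{assu:q-mult}) as well.
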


We strengthen this result when both of Assumption \ref{assu:q-mult} and \ref{assu:1arm} are satisfied:

\begin{thm} \label{thm:main}
  Let $d\geq 2$, and suppose that Assumptions \ref{assu:q-mult} and \ref{assu:1arm} hold.
  There exist positive constants $c_1,c_2$ depending only on $d$ and the constants appearing in the assumptions, such that for all $n,u > 1$,
\begin{align}
  \mathbb{P}_{p_c}\left(|\calC_n^{(1)}|\geq n^d\pi(n/u)\right)
  & \leq c_1\exp(-c_2 u^d).
\end{align}
Furthermore, for $d = 2$ there are constants $c_3,c_4>0$ such that the lower bound
\begin{align}
  \mathbb{P}_{p_c}\left(|\calC_n^{(1)}|\leq n^d\pi(n/u)\right)
  & \geq c_3\exp(-c_4 u^d) \label{eq:main_lbound}
\end{align}
holds for all $1\leq u \leq n$.
\end{thm}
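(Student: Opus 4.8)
The plan is to prove the two inequalities separately, after recording the normalisation that makes the exponent $u^d$ natural. By Assumptions~\ref{assu:q-mult} and~\ref{assu:1arm} one has the two-sided estimate $\pi(n)\eqv\pi(n/u)\,\pi(n/u,n)$ (the easy direction coming from independence of disjoint annuli), and $\pi(n/u,n)\gtrsim u^{-\alpha}$, so $\pi(n/u)/\pi(n)\lesssim u^{\alpha}$ with $\alpha<d$ (and $\eqv u^{5/48}$ in the plane). Since $\EE_{p_c}|\calC_n^{(1)}|\eqv n^d\pi(n)$, the threshold $n^d\pi(n/u)$ is a polynomial-in-$u$ multiple of the mean, so both bounds concern the largest cluster exceeding its mean by this factor. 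Writing $\ell=n/u$ and tiling $\Lambda_n$ by $N\eqv u^d$ boxes of side $\ell$, the guiding heuristic is that the largest cluster in a side-$\ell$ box has mean mass $\eqv\ell^d\pi(\ell)$, whence $N\cdot\ell^d\pi(\ell)=n^d\pi(n/u)$: the target is exactly the mass a single cluster would have if it collected a typical box-cluster's worth from essentially \emph{every} one of the $u^d$ boxes. Both bounds are thus statements that the giant cluster is anomalously space-filling across all $u^d$ boxes.

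For the matching lower bound \eqref{eq:main_lbound} in $d=2$ I would force this scenario directly. In each of the $N\eqv u^2$ boxes consider the increasing event that the box is crossed both horizontally and vertically, and in each seam between neighbouring boxes the crossing event gluing the two crossing clusters; a second-moment (Paley--Zygmund) argument, using $\EE M(B)\eqv\ell^2\pi(\ell)$ and $M(B)\le\ell^2$ together with the tightness of $M(B)/\EE M(B)$, shows that with probability bounded below the crossing cluster has mass $\ge c\,\ell^2\pi(\ell)$. By RSW each of these increasing events has probability at least a constant $p_0\in(0,1)$ uniformly in $\ell$, so FKG gives joint probability at least $p_0^{O(u^2)}=\exp(-c_4u^2)$, and on this event all box clusters merge into one cluster of mass $\ge c\,N\ell^2\pi(\ell)\eqv c\,n^2\pi(n/u)$. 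Absorbing the constant $c$ by replacing $u$ with $\beta u$ for a fixed $\beta$ (which only changes $c_4$) yields a single cluster of mass at least $n^2\pi(n/u)$ with probability $\ge c_3\exp(-c_4u^2)$ for all $1\le u\le n$; the restriction to $d=2$ enters only through RSW.

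For the upper bound I would set up a renormalisation recursion. Fix an integer $L$ and split $\Lambda_n$ into $L^d$ boxes of side $n/L$. If $\calC_n^{(1)}$ has mass $\ge n^d\pi(n/u)$ then, being a single connected set, in every box it meets it contributes only vertices joined to that box's boundary; comparing the total mass to the per-box contributions, and using Assumption~\ref{assu:q-mult} to keep the normalisation $\pi(\cdot)$ consistent, one finds that the boxes must collectively realise, at scale $n/L$ with parameter $u/L$, a copy of the same event with threshold $(n/L)^d\pi(n/u)=(n/L)^d\pi\bigl((n/L)/(u/L)\bigr)$. Since disjoint boxes are independent and the per-box upper tail is controlled by Theorem~\ref{thm:BCKS}, this gives a product recursion for $P(u):=\sup_n\PP_{p_c}(|\calC_n^{(1)}|\ge n^d\pi(n/u))$ in which each step multiplies $-\log P$ by a factor of order $L^d$; iterating $\log_L u$ steps down to a bounded scale, where Theorem~\ref{thm:BCKS} with $x$ a large constant provides $P(O(1))\le p_1<1$, turns $-\log P$ into $(L^d)^{\log_L u}\eqv u^d$, yielding $P(u)\le c_1\exp(-c_2u^d)$.

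The main obstacle is closing this recursion with the \emph{sharp} power $d$ rather than a smaller one. Because a single cluster may place its mass in a sub-box either by occupying many sub-boxes at the scaled threshold or by occupying fewer at an inflated local mass, the naive recursion only forces a \emph{positive fraction} of sub-boxes to contribute, which degrades the exponent below $d$. Overcoming this requires exploiting the \emph{convexity} of the cost: since the local upper tail from Theorem~\ref{thm:BCKS} is at least exponential, concentrating mass in few boxes is never cheaper than spreading it evenly across all $L^d$, so the optimal (and hence worst-case) configuration is the even one matching the lower-bound construction, and the effective branching is the full $L^d$. Making this optimisation rigorous, while also handling two technical points --- that the restriction of one cluster to a sub-box need not be connected inside it (which I would circumvent by running the recursion on the count of vertices joined to the sub-box boundary, whose upper tail obeys the same bound) and that the gluings across seams and the combinatorial entropy of \emph{which} boxes carry the mass must be absorbed uniformly in the scale --- is the heart of the argument, and it is precisely the $L^d$-fold product at each step that replaces the $u^{\alpha}$ of Theorem~\ref{thm:BCKS} by $u^d$.
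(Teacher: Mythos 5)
Your proof of the lower bound \eqref{eq:main_lbound} is correct and is essentially the paper's own argument: tile $\Lambda_n$ by $\eqv u^2$ boxes of side $\eqv n/u$, force RSW crossings that merge all box clusters into one, apply FKG, and use a constant-probability lower bound for the mass contributed by each box. The only difference is cosmetic: you obtain the per-box bound by Paley--Zygmund for the mass of the crossing cluster, whereas the paper's Lemma \ref{lemma:lbound_calV} proves it for the increasing events $\{|\calV_{n'}(n'v)|\geq C_{12}(n')^2\pi(n')\}$ by combining $\EE|\calV_{n'}|\gtrsim (n')^2\pi(n')$ with the upper-tail bound of Theorem \ref{thm:contr}, which streamlines both the second-moment step and the FKG step.

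The upper bound, however, contains a genuine gap, and it sits exactly where the content of the theorem lies. You concede that justifying the ``even allocation across sub-boxes is worst'' step --- i.e.\ the recursion $P(u)\leq(\mathrm{const})^{L^d}\,P(u/L)^{L^d}$ --- is ``the heart of the argument''; this is not a technicality that convexity repairs, and your proposed fallback makes it unprovable. Once you replace the restriction of the cluster to a sub-box by ``the count of vertices joined to the sub-box boundary'', the single-cluster constraint is discarded, and with it all of the decay. Quantitatively: take boxes $B_i$ of side $\ell\eqv n/u$ and let $V_i$ be the number of vertices of $B_i$ connected to $\partial B_i$. Every $v\in B_i$ satisfies $\PP_{p_c}(v\lra\partial B_i)\geq\pi(\ell)$, and Lemma \ref{lemma:sum_pi} gives a matching upper bound up to a constant, so $\EE V_i\eqv\ell^d\pi(\ell)$ and hence $\EE\sum_iV_i\eqv u^d\ell^d\pi(\ell)\eqv n^d\pi(n/u)$. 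Your target threshold is therefore within a bounded factor --- a factor you do not control, and which can even be less than $1$ --- of the mean of the reduced statistic $\sum_iV_i$. Since any bound derived solely from the implication $\{|\calC_n^{(1)}|\geq T\}\subseteq\{\sum_iV_i\geq T\}$ (valid once $T$ exceeds the volume of a single box) is at best $\PP_{p_c}(\sum_iV_i\geq T)$, no decay at all, let alone $c_1\exp(-c_2u^d)$, can be extracted from this reduction: Chernoff-type bounds for sums of independent exponential-tailed variables apply only above a large constant multiple of the mean, the constant being dictated by the uncontrollable constants $c_1,c_2$ in Theorem \ref{thm:BCKS}. The same calculation shows why the convexity heuristic cannot close the recursion: at the mean-level per-box thresholds produced by the even allocation, the bound $c_1e^{-c_2x}$ of Theorem \ref{thm:BCKS} exceeds $1$, while the entropy over allocations and the gluing prefactors are themselves $e^{\Theta(u^d)}$.

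What makes the event $\{|\calC_n^{(1)}|\geq n^d\pi(n/u)\}$ unlikely is that the counted vertices all lie in \emph{one} cluster, hence (after the reduction of \cite{BCKS99}) each carries an open arm to distance $\eqv n$, of probability $\eqv\pi(n)$, not merely to its local box boundary, of probability $\eqv\pi(n/u)$; and the correlations among such vertices must be handled by quasi-multiplicativity, not by box independence. This is precisely what the paper's Theorem \ref{thm:contr} supplies: the factorial-moment bound $\EE_{p_c}\binom{|\calV_n|}{k}\leq(c_1n^d\pi(n/\sqrt[d]{k})/k)^k$, proved by the ball-growing/blob counting argument of \cite{SDP13} (Propositions \ref{prop:prob_X} and \ref{prop:num_X} together with Observation \ref{obs:num_X}), from which the tail bound follows by exponential Chebyshev and the statement for $|\calC_n^{(1)}|$ by the arguments of \cite{BCKS99}. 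Nothing in your sketch substitutes for this moment bound, so the upper bound remains unproved.
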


The lower bound in Theorem \ref{thm:main} follows from standard RSW methods, nevertheless, for completeness we include its proof in Section \ref{ssec:pf_main}.
The upper bound above relies on Theorem \ref{thm:contr} below, which is our main contribution. Let
\begin{equation}
  \calV_n := \left\{v \in \Lambda_n\, | \, v\leftrightarrow
\partial \Lambda_{2n}\right\} \label{eq:def_V_n}
\end{equation}
denote the set of vertices in $\Lambda_n$ which are connected to $\partial\Lambda_{2n}$.

\begin{thm} \label{thm:contr} Let $d\geq 2$, and suppose that Assumptions \ref{assu:q-mult} and \ref{assu:1arm} hold.
  There is a constant $c_1$ such that for all $n,u>0$
\begin{equation}
  \mathbb{E}_{p_c} \binom{\left|\calV_n\right|}{k} \leq (c_1
n^d\pi(n/\sqrt[d]{k})/k)^k. 
\end{equation}

Consequently, for some positive constants $c_2,c_3$, we have
\begin{equation}
  \mathbb{P}_{p_c}\left(\left|\calV_n\right|\geq
n^d\pi\left(n/u\right)\right) \leq c_2\exp(-c_3 u^d).
\end{equation}
The constants $c_1,c_2,c_3$ above only depend on $d$ and the constants appearing in Assumptions \ref{assu:q-mult} and \ref{assu:1arm}.
\end{thm}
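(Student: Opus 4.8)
The plan is to establish the factorial-moment bound first and then read off the tail estimate by Markov's inequality. Expanding the binomial moment over unordered $k$-subsets,
\[
  \EE_{p_c}\binom{\left|\calV_n\right|}{k}=\sum_{\{v_1,\dots,v_k\}\subseteq\Lambda_n}\PP_{p_c}\left(v_1,\dots,v_k\in\calV_n\right),
\]
so everything reduces to a sharp pointwise bound on the joint connection probability together with an efficient summation over point configurations. The $k=1$ case already fixes the shape: since $v\in\Lambda_n$ forces any connection to $\partial\Lambda_{2n}$ to cross the annulus $\Lambda_n(v)\setminus\{v\}$, one has $\PP_{p_c}(v\in\calV_n)\le\pi(n)$ and hence $\EE_{p_c}|\calV_n|\le|\Lambda_n|\,\pi(n)\le Cn^d\pi(n)$.

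The heart of the argument is a \emph{multi-point arm estimate}: I would show there is a constant $C$ so that for distinct $v_1,\dots,v_k\in\Lambda_n$,
\[
  \PP_{p_c}\left(v_1,\dots,v_k\in\calV_n\right)\le C^k\,\pi(n)\prod_{e\in T}\pi(w_e),
\]
where $T$ is a spanning tree (e.g.\ a minimal spanning tree) on $\{v_1,\dots,v_k\}$ rooted at $\partial\Lambda_{2n}$ and $w_e$ is (half) the length of the edge $e$, the root edge carrying the scale $n$. The mechanism is that each $v_i\leftrightarrow\partial\Lambda_{2n}$ forces $v_i$ to cross a \emph{private} annulus around itself, of radius comparable to its distance to the rest of the tree, together with \emph{one} common long arm from the point-cluster out to $\partial\Lambda_{2n}$; applying the van den Berg--Kesten inequality to these disjointly occurring crossings produces the product of one-arm probabilities $\pi(w_e)$, and quasi-multiplicativity (Assumption~\ref{assu:q-mult}) absorbs the splitting of each arm into two scales at the cost of a constant per point. \emph{This is the step I expect to be the main obstacle}: one must choose the hierarchy of annuli (the spanning-tree scales) so that all the crossings invoked genuinely occur on pairwise disjoint edge sets, so that BK applies, while keeping the bookkeeping of constants of the clean form $C^k$.

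For the summation I would add points one at a time in the greedy (Prim) order, so that the $j$-th point contributes a factor that, summed over its location against a tree built from $j-1$ earlier points, is controlled by the shell estimate
\[
  \sum_{v\in\Lambda_n}\pi\!\left(\dist(v,\{\text{earlier points}\})\right)\le C\,n^d\,\pi\!\left(n/j^{1/d}\right).
\]
I would prove this by layering over distance shells and invoking Assumption~\ref{assu:1arm}: because $\pi(r)\gtrsim r^{-\alpha}$ with $\alpha<d$, the sum $\sum_r r^{d-1}\pi(r)$ is dominated by its largest scale, and balancing the ``territory'' scale $r^\ast\sim n/j^{1/d}$ (at which $j$ balls of radius $r^\ast$ fill $\Lambda_n$) against the tail yields the stated bound. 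Carrying out the $k$ iterated sums, dividing by $k!$, and using $1/k!\le(e/k)^k$ together with the monotonicity bound $\prod_{i=1}^k\pi(n/i^{1/d})\le\pi(n/k^{1/d})^k$ gives
\[
  \EE_{p_c}\binom{\left|\calV_n\right|}{k}\le\frac{C^k}{k!}\,(n^d)^k\prod_{i=1}^{k}\pi\!\left(n/i^{1/d}\right)\le\left(\frac{c_1\,n^d\,\pi(n/k^{1/d})}{k}\right)^{\!k},
\]
which is the claimed first inequality.

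Finally, for the probability bound I would apply Markov's inequality to the $k$-th binomial moment. With $t:=n^d\pi(n/u)$ and any integer $1\le k\le t$, the inequality $\binom{t}{k}\ge(t/k)^k$ gives
\[
  \PP_{p_c}\!\left(\left|\calV_n\right|\ge t\right)\le\frac{\EE_{p_c}\binom{|\calV_n|}{k}}{\binom{t}{k}}\le\left(c_1\,\frac{\pi(n/k^{1/d})}{\pi(n/u)}\right)^{\!k}.
\]
Choosing $k=\lfloor\beta u^d\rfloor$ for a small fixed constant $\beta$ makes $n/k^{1/d}$ a large fixed multiple of $n/u$, and the (uniform polynomial) decay of $\pi$ then drives the ratio below $1/(2c_1)$ uniformly in $n,u$, so the right-hand side is at most $2^{-\beta u^d}\le c_2\exp(-c_3u^d)$; small $u$ or $k=0$ is absorbed into $c_2$. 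The one place where care is needed here is precisely that one must invoke the decay of $\pi$ (available in the settings where the assumptions are used) rather than Assumption~\ref{assu:1arm}, which only bounds the ratio from below.
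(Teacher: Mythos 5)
Your first half --- the multi-point estimate $\PP_{p_c}(v_1,\ldots,v_k\in\calV_n)\le C^k\pi(n)\prod_{e\in T}\pi(w_e)$ with $T$ a minimal spanning tree, proved via a hierarchy of disjoint annuli --- is in substance the paper's Proposition \ref{prop:prob_X}: the merge radii $\calR(X)$ of the ball-growth process are exactly the half edge-lengths of the MST, and the paper uses independence of crossings in the disjoint regions $G(B)$ (rather than BK) plus quasi-multiplicativity. The genuine gap is in your summation, specifically the factor $1/k!$, i.e.\ the $k^{-k}$ in the target bound $(c_1 n^d\pi(n/\sqrt[d]{k})/k)^k$. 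Expanding $\EE\binom{|\calV_n|}{k}$ over ordered tuples and dividing by $k!$ is legitimate only if the probability bound holds with the insertion-distance product $\prod_{j\ge 2}\pi\bigl(\dist(v_j,\{v_1,\ldots,v_{j-1}\})/2\bigr)$ for \emph{every} ordering of the $k$ points. Your mechanism proves it only for a greedy (Prim/Kruskal) ordering, where insertion distances coincide with MST edge lengths. For other orderings the insertion distances are larger (sorted, they dominate the MST edge lengths coordinatewise, the insertion tree being a spanning tree), so $\prod_j\pi(\dist_j)$ is \emph{smaller} than $\prod_{e\in T}\pi(w_e)$ --- e.g.\ for equally spaced collinear points, inserting a far endpoint early replaces a factor $\pi(1)$ by $\pi(\mathrm{diam})$ --- and your estimate does not imply the per-ordering bound; making it true would require an additional geometric lemma (that $\prod_j \dist_j\le C^k\prod_{e\in T}w_e$ for every ordering) which appears nowhere in your argument. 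Using only the MST bound, each unordered $X$ injects into one Prim ordering, and you get the ordered sum \emph{without} the $1/k!$, i.e.\ only $\EE\binom{|\calV_n|}{k}\le(Cn^d\pi(n/\sqrt[d]{k}))^k$. That loss is fatal for the tail bound: with $t=n^d\pi(n/u)$ and $k=\lfloor\beta u^d\rfloor$, the per-factor ratio in your Markov step becomes $Ck\,\pi(n/\sqrt[d]{k})/\pi(n/u)\ge c\,u^d$, since the two scales differ only by the constant $\beta^{-1/d}$ and Assumption \ref{assu:1arm} bounds that $\pi$-ratio below by a constant; the bound is then vacuous. The paper extracts the $k^{-k}$ from a different source, Observation \ref{obs:num_X}: pigeonhole forces the $i$-th largest merge radius to be at most roughly $n/i^{1/d}$, so summing $r^{d-1}\pi(r)$ over this restricted range gives a volume factor $n^d/i$ rather than your $n^d$, and $\prod_i i^{-1}$ produces $k^{-k}$ up to $C^k$. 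Your shell estimate, which lets the $j$-th point range over all of $\Lambda_n$, has no analogue of this constraint.

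There is a second, smaller but real, gap, which you flag yourself: your deduction of the tail bound needs $\pi(Lm)/\pi(m)$ to be small for large $L$ uniformly in $m$, i.e.\ a quantitative \emph{upper} bound on arm-probability ratios. This does not follow from Assumptions \ref{assu:q-mult} and \ref{assu:1arm} (both are satisfied by $\pi\equiv 1$), whereas the theorem asserts that $c_2,c_3$ depend only on $d$ and the constants in the assumptions. The paper's argument avoids this entirely: it bounds the exponential moment $\EE\bigl(t^{|\calV_n|}\bigr)=\sum_k(t-1)^k\EE\binom{|\calV_n|}{k}$ with $t-1$ of order $u^d/(n^d\pi(n/u))$, so the comparison of $\pi(n/\sqrt[d]{k})$ with $\pi(n/u)$ is needed only for $k\gtrsim u^d$, where the smaller scale sits in the numerator and Assumption \ref{assu:1arm} gives exactly the required direction; the convergence of the resulting series is supplied precisely by the $k^{-k}$ you are missing.
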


A weaker version of Theorem \ref{thm:contr} is proved in \cite{BCKS99} as Lemma 6.1. Theorem \ref{thm:main} follows from
Theorem \ref{thm:contr} by arguments analogous to those in \cite{BCKS99} which lead from \cite[Lemma 6.1]{BCKS99}
to Theorem \ref{thm:BCKS}. Thus we only prove Theorem \ref{thm:contr} and the lower bound in Theorem \ref{thm:main} here.

\begin{rems*}
  \begin{enumerate}[i)] 
    \item Our motivation for studying the size of large critical clusters comes from the forest-fire processes described as follows.
      Let $\lambda$ be some small positive number. At time $0$ all the vertices of $\ZZ^d$ are empty. As time goes on, empty vertices get occupied by a tree at rate $1$, independently from each other.
      Vertices with trees get struck by lightning at rate $\lambda$ independently from each other. When a tree gets struck by lightning, its forest (its connected component in $\ZZ^d$ of vertices with trees)
      is ignited, that is, all of the trees are removed in this forest. Then trees occupy empty vertices with rate $1$, and lightnings strike and so on.
	  We are particularly interested in the case where $\lambda > 0$ is small.	  

      As we can see, a forest burns down at rate proportional to its size, thus a precise control of the size of critical clusters can be useful for the study of the processes above.
    \item \cite[Proposition 6.3]{BCKS99} also treats the case where the percolation
      parameter $p$ is different from $p_c$. Our results extend to
      this case in an analogous way as in \cite{BCKS99}. Furthermore, Assumptions \ref{assu:q-mult} and \ref{assu:1arm},
      our results, as well as those in \cite{BCKS99}, in the case $d = 2$ hold for site/bond percolation on other lattices: As long as the lattice 
      is invariant under a translation, a rotation around the origin with some angle and a reflection on one
      of the coordinate axes, the results above follow. Furthermore, these results remain valid for some inhomogeneous percolation models.
      See \cite{Grimmett1999} for more details.
    \item The proof of Theorem \ref{thm:contr} relies on the method presented in \cite{SDP13}. However, the computation there
      only considers the case $d = 2$. As we will see below, the arguments in \cite{SDP13} extend to the case $d \geq 3$ in a straightforward way.

      Furthermore, by the results in \cite{D-CST} the arguments in \cite{SDP13} can also be adapted for
      the critical two dimensional FK percolation model with $q\geq 1$. Hence statements analogous
      to those in Theorem \ref{thm:main} and \ref{thm:contr} remain valid in such context.
    \item Recall a ratio limit theorem, Proposition 4.9 of \cite{Garban2010}
      for the one arm events. Combining it with Theorem \ref{thm:main} we get,  
      for site percolation on the triangular lattice,
      \begin{align*}
    	\mathbb{P}_{p_c}\left(\left|\calC_n^{(1)}\right|\geq x n^2\pi(n)\right)
	    & \leq c_1\exp(-c_2 x^{96/5}), \\
    	& \geq c_3\exp(-c_4 x^{96/5}) 
      \end{align*}
      with some universal constants $c_i$ for all $x>0$ and $n\geq n_0(x)$.
    \item The upper bound in Theorem \ref{thm:main} trivially extends to $|\calC_n^{(l)}|$ the volume of the $l$th largest cluster. Furthermore, in dimension $2$ the same lower bound with different constants also holds. Its derivation is analogous to that for the largest cluster, hence we omit it.      
    \item Theorem \ref{thm:contr} gives upper bounds on the moments and the tail probability of $\calV_n/n^2\pi(n)$, where, roughly speaking,
      $\calV_n$ counts the points in $\Ball_n$ with one long open arm. Similar upper bounds can be achieved for the number of points with multiple
      disjoint arms.

      Let $k\in \NN$ and $\sigma \in \{0,1\}^k$. Let $\pi_\sigma(m,n)$ denote the probability that $\partial \Ball_m$ and
      $\partial \Ball_n$ are connected by $k$ disjoint arms, where in a counter-clockwise order of these arms the $i$th arm is open when $\sigma_i =1$
      and dual closed otherwise. Suppose that Assumption \ref{assu:q-mult} and \ref{assu:1arm} are satisfied when $\pi$ is replaced by $\pi_\sigma$ with some
      constants $C_1,C_2$ and for some $\alpha_\sigma > 0$ not necessarily smaller than $d$. We have two cases: when $\alpha_\sigma < d$, we get results analogous
      to Theorem \ref{thm:contr}. However, when $\alpha_\sigma >d$, by checking the computations in the proof of Theorem \ref{thm:contr}, one gets
      \begin{align*}
	    \EE_{p_c}\binom{|\calV^\sigma_n|}{k} \leq c_1^k n^d \pi_\sigma(n)
      \end{align*}
      for some constant $c_1$ where $\calV_n^\sigma$ denotes the multi-arm analogue of $\calV_n$.

      A lower bound analogous to that in the second part of Theorem \ref{thm:main} hold in two dimensions when $\sigma$ switches colours at most four times
      and $\alpha_\sigma < 2$. However, in this case the construction in the lower bound is more delicate, but we can apply the
      strong separation lemma \cite[Lemma 6.2 and 6.3]{DS11} to deduce the required lower bound. 
    \item \label{rem:high_d} Let us turn to the case $d\geq 19$. Kozma and Nachmias \cite[Theorem 1]{KN11} proved that $\pi(n)=O(n^{-2})$ building
      on the results in \cite{Ha08}. This combined with \cite[Theorem 5]{Ai97} gives that $|\calC_n^{(1)}|$ is of order $n^{4+o(1)}$. Hence the bounds in Theorem \ref{thm:BCKS}
      and \ref{thm:main} are much weaker than those in \cite[Theorem 5]{Ai97}. Nevertheless, we get some new conditional results which
      are interesting in dimensions below $19$.
    \item We note some results on the distribution of $|\calC_n^{(l)}|$ for $l\geq1$. We already mentioned the results of \cite{BCKS99} which are the most relevant for our purposes. The same authors in \cite{BCKS01} describe the connection between the volume and the diameter of the largest critical and near-critical clusters. 
      J\'{a}rai \cite{Ja03} showed, among other things, that the microscopic scale behaviour of the largest critical clusters can be described by that of the incipient infinite cluster.
      Finally, van den Berg and Conijn \cite{vdBC12} proved that the probability of $|\calC_n^{(1)}|/n^2\pi(n)\in (a,b)$ is positive for all $0<a<b$ for sufficiently large $n$. While in \cite{vdBC13} they showed, roughly speaking, that the distribution of $|\calC_n^{(1)}|/n^2\pi(n)$ has no atoms for large $n$ and that $|\calC_n^{(l)}|-|\calC_n^{(l+1)}|= O(n^2\pi(n))$ for $l\geq1$.
    \end{enumerate}
\end{rems*}

\subsection*{Organization of the paper} In Section \ref{sec:NotandPre} we provide some more notation. We sketch the arguments
of \cite{SDP13} which are essential for the proofs of our results in Section \ref{ssec:sdp}. Building on these results, we prove
Theorem \ref{thm:contr} in Section \ref{ssec:pf_contr}. We conclude in Section \ref{ssec:pf_main} where we deduce the lower bound
in Theorem \ref{thm:main}.

\subsection*{Acknowledgements}
The author thanks Ren\'{e} Conijn and Rob van den Berg for fruitful discussions
and for suggesting the problem. He is grateful to Markus Heydenreich for his advice on percolation in high dimensions.

\section{Notation and preliminaries}\label{sec:NotandPre}

The space of configurations is  $\Omega := \{0,1\}^{E(\ZZ^d)}$. For $\omega\in\Omega$ let $\omega(e)\in \{0,1\}$ denote
its value at $e\in E(\ZZ^d)$. We say that $e\in E(\ZZ^d)$ is open, if $\omega(e) = 1$, otherwise $e$ is closed. 
For $p\in[0,1]$ let $\PP_{p}$ denote the product measure on $\Omega$ where $\PP_{p}(\omega(e) = 1) = p$. Let $p_c = p_c(d)$
denote the critical percolation parameter. That is, $p_c = \sup\{p\,|\,\PP_{p}(0\leftrightarrow\infty)\} = 0)$. 


\subsection{The counting argument of \cite{SDP13}} \label{ssec:sdp}

The proof of Theorem \ref{thm:contr} is based on a counting argument found in
\cite{SDP13}. There the argument there is a strengthens the proof of \cite[Lemma 6.1]{BCKS99} and it is used to count certain passage points,
which, roughly speaking, are the starting points six disjoint open and closed arms.
Herein we give a sketch of the argument in the one arm case.

Let $k\in \mathbb{N}$ and 
\[
  X = \left\{x_1,x_2,\ldots, x_{k}\right\} \subseteq \Lambda_n.
\]
We give a bound on the probability of the event $\left\{\calV_n\supseteq
X\right\}$, but first some definitions.

Let $T_0$ denote the empty graph on the vertex set $X$. Let us start blowing a
ball at each point of $X$ at unit speed. That is, at time $t\geq 0,$ we have the
balls $\Lambda_t(x)$, $x\in X$.

For small values of $t$ these balls are pairwise disjoint. As $t$ increases, more and more of these balls intersect each other.
Let $r_1,$ denote the smallest $t$ when the first pair of balls touch. We pick one such pair balls in some deterministic way, with centres
$u_1,v_1\in X$. We draw an edge $e_1$ between $u_1$ and $v_1$ and label it with $l(e_1):= r_1$, and get the graph $T_1$. Note that 
$||u_1 - v_1||_\infty = 2r_1$. Then we continue with the growth process, and stop at time $r_2$ if we find a
pair of vertices $u_2,v_2\in X$ such that $u_2,v_2$ are in different connected
components of $T_1$ and $\Lambda_{r_2}(u_2)$ and $\Lambda_{r_2}(v_2)$ touch.
Then we draw an edge $e_2$ between one such deterministically chosen pair with the
label $l(e_2) := r_2$ and get $T_2$. Note that it can happen that $r_1 = r_2$.
We continue with this procedure till we arrive to the tree $T_{k-1}$.
Let $\calR(X)$ denote the multiset containing $r_i$ for $i=1,2,\ldots,k-1$.

As we saw above, $r_1 = \frac{1}{2}\min_{u,v\in X, u\neq v}||u-v||$.
Furthermore, it is easy to see that for $i = 1,2,\ldots,k-1$ there are at least $k + 1 - i$
vertices of $X$ such that any pair of them is at least $2r_i$ distance from other.
This combined with the pigeon-hole principle provides the following observation:
\begin{obs} \label{obs:num_X}
  For all $i\in [0,\sqrt[d]{k-1}]\cap \ZZ$ we have $r_{k - i^d} < \frac{n}{i}$.
\end{obs}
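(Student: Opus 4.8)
The plan is to make the counting sketched in the text precise via a single pigeonhole step applied to the separated points produced by the growth process. First I would record the elementary bookkeeping for the trees $T_i$. Since every edge $e_i$ is drawn between two \emph{distinct} components, the graph $T_{i-1}$ has exactly $k-(i-1)=k+1-i$ connected components. Moreover, by the very definition of $r_i$ as the first time that a pair of balls centred at vertices in distinct components of $T_{i-1}$ meet, no two vertices lying in different components of $T_{i-1}$ can be at $\ell^\infty$-distance smaller than $2r_i$: for every such cross-component pair the balls are disjoint at all times $t<r_i$, i.e. the distance exceeds $2t$, and letting $t\uparrow r_i$ gives distance at least $2r_i$. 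Choosing one vertex from each component therefore yields a set $Y_i\subseteq X$ of $k+1-i$ points that are pairwise at distance at least $2r_i$, which is exactly the assertion quoted just before the observation.

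Next I would fix an integer $i$ (written $m$ below, to avoid clashing with the edge index) with $1\le m\le\sqrt[d]{k-1}$ and set $j:=k-m^d$, so that $1\le j\le k-1$. Applying the previous paragraph with edge index $j$ produces a set $Y_j\subseteq X$ of $k+1-j=m^d+1$ points that are pairwise at $\ell^\infty$-distance at least $2r_{k-m^d}$. The heart of the argument is then the pigeonhole: I partition the cube $[-n,n]^d\supseteq\Lambda_n$ into $m^d$ congruent half-open sub-cubes, each of side $2n/m$. Since $|Y_j|=m^d+1$ strictly exceeds the number of cells, two of its points fall in a common sub-cube and hence lie at $\ell^\infty$-distance strictly less than $2n/m$. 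Comparing this with the separation $2r_{k-m^d}$ of $Y_j$ gives $2r_{k-m^d}<2n/m$, that is $r_{k-m^d}<n/m$, which is the observation. The boundary case $m=0$ is vacuous, the right-hand side $n/m$ being $+\infty$.

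I expect the only delicate point to be the passage from the cells to the \emph{strict} inequality. Two points in a common \emph{closed} cube of side $2n/m$ are guaranteed only to satisfy distance $\le 2n/m$, so to get the strict bound one must genuinely use half-open cells, so that sharing a cell forces distance $<2n/m$, and assign each point of $Y_j$ to exactly one such cell, taking a little care with the vertices lying on the outer faces $\{x_\ell=n\}$ of the box. Everything else — the component count of $T_{i-1}$ and the lower bound $2r_i$ on cross-component distances — is immediate from the construction of the $T_i$, so this pigeonhole step is both the core of the proof and its only real obstacle.
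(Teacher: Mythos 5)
Your argument is the same as the paper's: the two sentences preceding the Observation are exactly your component count (after $i-1$ merges, $T_{i-1}$ has $k+1-i$ components, hence one obtains $k+1-i$ points of $X$ that are pairwise at $\ell^\infty$-distance at least $2r_i$) followed by the pigeonhole over $i^d$ sub-cubes of side $2n/i$. That part of your write-up, including the verification that cross-component pairs are at distance at least $2r_i$, is correct.

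The gap is precisely the point you flagged and then waved away: no amount of care with half-open cells can produce the strict inequality, because the statement with $<$ is false as written. Take any $i\geq 1$ dividing $n$ and let $X\subseteq\Lambda_n$ be the grid of $k=(i+1)^d$ points whose coordinates all lie in $\{-n,\,-n+2n/i,\,\ldots,\,n\}$. The minimal pairwise $\ell^\infty$-distance in $X$ is exactly $2n/i$, and at time $n/i$ the touching graph contains the (connected) axis-aligned nearest-neighbour grid, so the growth process draws all $k-1$ edges at that single time: $r_1=\cdots=r_{k-1}=n/i$. Since $k-1=(i+1)^d-1\geq i^d$, the index $i$ is admissible, yet $r_{k-i^d}=n/i$, not $<n/i$. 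The same configuration shows the obstruction is not bookkeeping at the outer faces: the $(i+1)^d$ grid points are pairwise at distance $\geq 2n/i$, so any family of sets of $\ell^\infty$-diameter $<2n/i$ covering $\Lambda_n$ must have at least $(i+1)^d>i^d$ members, and no partition into $i^d$ cells can do what you ask of it. What your pigeonhole with closed cells does prove is $r_{k-i^d}\leq n/i$, and that non-strict form is the correct statement of the Observation; it is also all that is used later in the paper, since the restriction imposed before \eqref{eq:3rd_bound} is $\bar r_i\leq n/2^l$. So the right fix is to state and prove the Observation with $\leq$ in place of $<$: your component count plus pigeonhole then gives a complete proof, identical in strategy to the paper's (whose own two-line sketch likewise only yields the non-strict bound).
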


We say that $B$ is a blob, if $B$ is a non-empty connected component of $T_i$
for some $i$. In the growth process above blobs merge with
other blobs and form bigger ones over time. Let 
\begin{align*}
  b(B) 
  & := \min\{r_i \,:\, B \text{ is a connected component of } T_i\}, \\
  d(B) 
  & := \max\{r_i \,:\, B \text{ is a connected component of } T_i\}
\end{align*}
denote the birth time, and the death time of a blob $B$. It is easy to see that the sets
\newcommand{\ib}{ib}
\newcommand{\ob}{ob}
\[
  G(B) := \begin{cases}
            \bigcup_{x\in B}\Lambda_{d(B)}(x) \setminus \bigcup_{x\in B}\Lambda_{b(B)}(x) & B \neq X, \\
            \Ball_{2n} \setminus \bigcup_{x\in B}\Lambda_{d(B)}(x) & B = X
          \end{cases}
\]
are pairwise disjoint. See Figure \ref{fig:blobs}.
  \begin{figure}
    \begin{center}
      \includegraphics[width=0.5\textwidth]{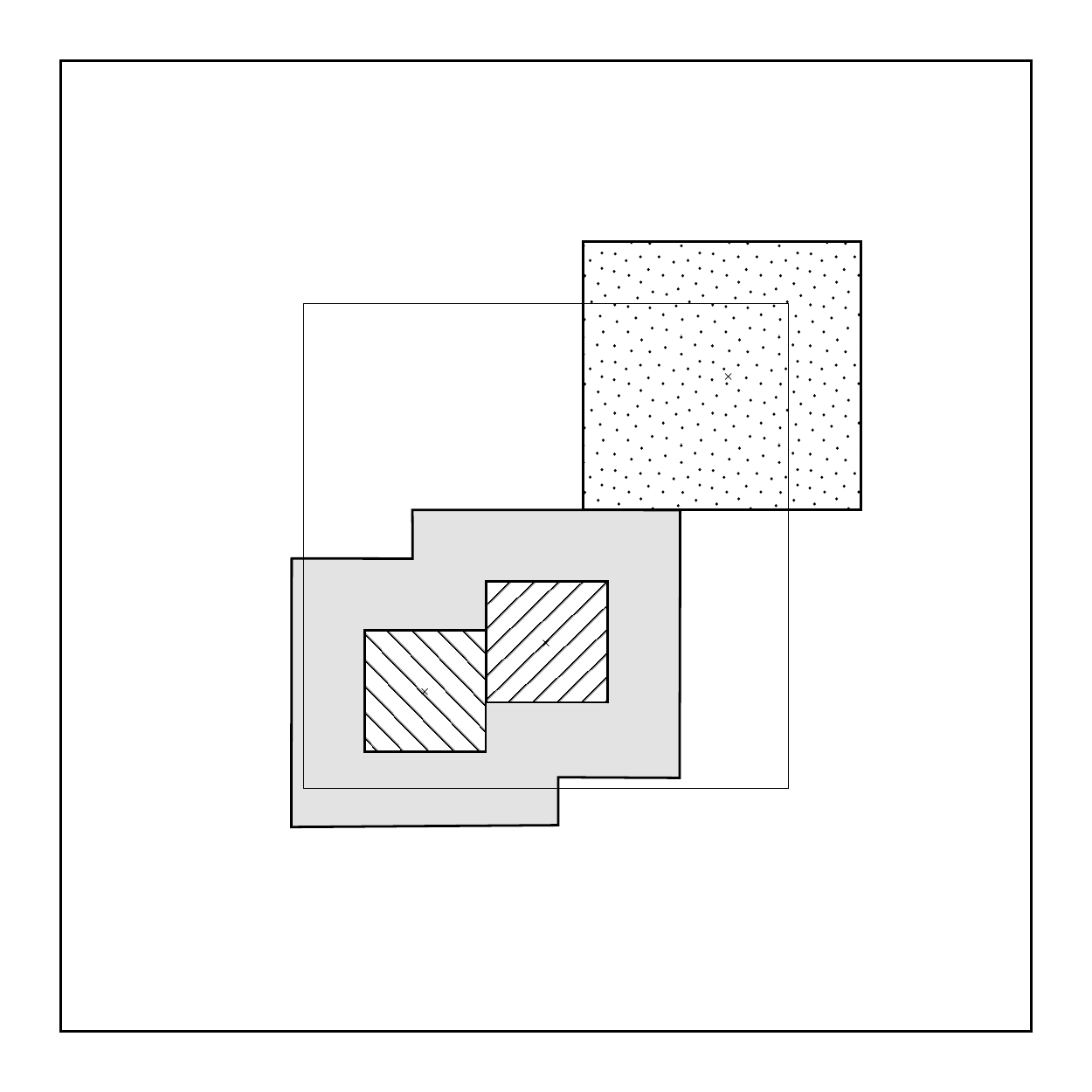}
    \end{center}
    \caption{The areas with different patterns correspond the sets $G(B)$.}
    \label{fig:blobs}
  \end{figure}
Let
\begin{align*}
  \ib(B) &:= \partial \left(\bigcup_{x\in B}\Lambda_{b(B)}(x)\right), 
  & \ob(B)&:=
    \begin{cases}
      \partial \left(\bigcup_{x\in B}\Lambda_{d(B)}(x)\right) & B\neq X,\\
      \partial \Ball_{2n} & B = X
    \end{cases}
\end{align*}
denote the boundary of the inner and outer faces of the sets $G(B)$, \resp.
Now we are ready to make a bound on the probability $\PP(\calV_n\supseteq X)$.
Recall the definition of $\calV_n$ from \eqref{eq:def_V_n}. For all $x \in V(B)$ we have
\[
  \{\calV_n\supseteq X\} \subseteq \{x\leftrightarrow \partial\Lambda_{2n}\} \subseteq \{\ib(B)\leftrightarrow\ob(B)\}.
\]
The events $\{\ib(B)\leftrightarrow\ob(B)\}$ are independent since they depend only
on the state of the edges in $G(B)$, which are pairwise disjoint subsets of $\Ball_{2n}$. Hence 
\begin{align*}
  \PP_{p_c}\left(\calV_n\supseteq X\right)
  & \leq \PP_{p_c}\left(\bigcap_{B \text{blob}}\{\ib(B)\leftrightarrow\ob(B)\}\right) \\
  & \leq \prod_{B \text{ blob}} \PP_{p_c}\left(\ib(B)\leftrightarrow\ob(B)\right).
\end{align*}
Then, as in the proof of \cite[Proposition 14]{SDP13}, an induction on the blobs leads to the following bound.
\begin{prop} \label{prop:prob_X}
  Suppose that Assumption (I) and (II) holds. Then there is a constant $C_3 = C_3(c_1,c_2,\alpha,d)$ such that 
  \[
    \PP_{p_c}\left(\calV_n\supseteq X\right) \leq C_3 \pi(n) \prod_{r\in\calR(X)}C_3\pi(r)
  \]
  for all $X\subseteq \Ball_n$
\end{prop}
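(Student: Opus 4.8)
The plan is to telescope the product $\prod_{B}\PP_{p_c}(\ib(B)\leftrightarrow\ob(B))$ along the tree of blobs, using the forward quasi-multiplicativity of Assumption~\ref{assu:q-mult} to chain the radial scales. First I would record the combinatorics of the growth process: there are exactly $2k-1$ blobs, namely the $k$ singletons $\{x_i\}$ together with the $k-1$ blobs created at the merges, and each merge happens at a scale $r\in\calR(X)$, destroying two blobs and creating one. The bookkeeping identity that drives everything is that the death scale of a child equals the birth scale of its parent (both equal the merge scale), so that along every root-to-leaf chain the intervals $[b(B),d(B)]$ tile $[0,2n]$ radially.

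Next I would bound each factor $\pi(B):=\PP_{p_c}(\ib(B)\leftrightarrow\ob(B))$ by a one-arm probability. Writing $U_r(B):=\bigcup_{x\in B}\Lambda_r(x)$, the region $G(B)$ is the shell $\{y:b(B)<\dist(y,B)\le d(B)\}$ (for $B\neq X$; for the root the outer boundary is $\partial\Lambda_{2n}$), and a crossing of it is, up to the spatial extent of $B$, a single one-arm event between scales $b(B)$ and $d(B)$. Centering on a point of $B$ gives $\pi(B)\le C\,\pi(b(B),d(B))$ when the inner neighbourhood is small compared with $d(B)$; when $B$ is spread out I would center on $U_{b(B)}(B)$ and pay a correction controlled by Assumption~\ref{assu:1arm}, of the shape $\pi(B)\le C\,(\diam U_{b(B)}(B)/b(B))^{\alpha}\,\pi(b(B),d(B))$, while for degenerate blobs whose shell is thinner than their diameter I would simply use $\pi(B)\le 1$. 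Forward quasi-multiplicativity then turns each surviving factor into a ratio, $\pi(b(B),d(B))\le C_1\,\pi(d(B))/\pi(b(B))$.

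The induction itself is a clean algebraic telescoping. I would prove by induction over subtrees that the product of $\pi(B')$ over the subtree rooted at $B$ is at most $D^{2|B|-1}\,\pi(d(B))\prod_{r\in\calR(B)}\pi(r)$, with $D=CC_1$ and $\calR(B)$ the merge scales inside the subtree (so $|\calR(B)|=|B|-1$). The base case is a singleton, where the bound is the one-arm estimate $\pi(\{x\})\le C\,\pi(d(\{x\}))$. At a merge of $B_1,B_2$ into $B$ at scale $r=b(B)=d(B_1)=d(B_2)$, I multiply the two inductive bounds by $\pi(B)\le CC_1\,\pi(d(B))/\pi(r)$; the two numerator copies of $\pi(r)$ arising from $\pi(d(B_i))$ cancel the denominator up to a single surviving $\pi(r)$, which is exactly the merge scale newly adjoined to $\calR(B)$, and the constants combine to $D^{2|B|-1}$. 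Applying this at the root, whose shell instead reaches $\partial\Lambda_{2n}$ and therefore contributes $\pi(2n)\le\pi(n)$ in place of $\pi(d(X))$, yields $\prod_B\pi(B)\le D^{2k-1}\pi(n)\prod_{r\in\calR(X)}\pi(r)$, which is of the required form $C_3\,\pi(n)\prod_{r}C_3\,\pi(r)$ once $C_3\ge D^{2}$.

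The hard part is the per-blob one-arm estimate together with keeping all multiplicative losses at the level $C^{k}$. A naive union bound over the $\sim|B|$ possible locations of the crossing would cost a factor $|B|$ per blob and hence a factor $k!$ over a caterpillar-shaped tree, which is fatal; the key is that a diameter loss is only incurred by a "good" blob whose shell genuinely spans more than one scale, that for such blobs Assumption~\ref{assu:1arm} converts the diameter ratio into the polynomial factor $(\diam/b)^{\alpha}$, and that the blobs carrying a large such ratio arise precisely from many small merges at a common scale and so cannot overlap enough to make the product exceed $C^{k}$. Verifying this uniformly in $n$ and in the geometry of $X$—in particular giving the clean shell one-arm bound and checking that these corrections telescope rather than accumulate—is the crux, and is exactly where the computation of \cite{SDP13} is invoked.
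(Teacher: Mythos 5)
Your proposal is correct and takes essentially the same route as the paper: the paper's own ``proof'' of Proposition~\ref{prop:prob_X} consists of the blob/shell decomposition with independence (given in Section~\ref{ssec:sdp} just before the statement) followed by an appeal to the induction of \cite[Proposition 14]{SDP13}, and your telescoping induction over the blob tree --- per-blob one-arm estimates, quasi-multiplicativity to form the ratios $\pi(d(B))/\pi(b(B))$, cancellation at each merge, and deferral of the uniform control of the diameter corrections to the computation in \cite{SDP13} --- is exactly that argument, spelled out in more detail than the paper itself provides.
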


Proposition \ref{prop:prob_X} provides an upper bound on
$\PP_{p_c}\left(\calV_n\supseteq X\right)$ as a function of $\calR(X)$. 
To give a bound on $\EE_{p_c}\binom{|\calV_n|}{k},$ we bound the
number of sets $X$ such that $\calR(X) = R$ for fixed $R$. By arguments
analogous to the proof of \cite[Proposition 15]{SDP13} we get the following.
\begin{prop} \label{prop:num_X}
  There is a universal constant $C_4$ such that for all multisets $R$ with $k-1$ elements we have
  \begin{equation}
    \#\left\{X \subseteq \Lambda_n \, : \, |X| = k,\,\calR(X)=R \right\} \leq C_4 \calO(R) n^d \prod_{r\in R} dC_4r^{d-1},
  \end{equation}
  where $\calO(R)$ denotes the number of different ways the elements of $R$ can be ordered.
\end{prop}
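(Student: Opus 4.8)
The plan is to exhibit an explicit encoding of every $X$ with $\calR(X)=R$ by a bounded amount of data, and then to count the admissible data. Recall that the growth process builds the tree $T_{k-1}$ by $k-1$ merges, the $i$-th of which joins two blobs through a pair of touching vertices $u_i,v_i$ with $\|u_i-v_i\|_\infty = 2r_i$. First I would record a linear ordering of the multiset $R$; this fixes the order in which the merges will be replayed and, when radii coincide, breaks ties, and there are exactly $\calO(R)$ such orderings. Second I would record the position in $\Lambda_n$ of one distinguished reference vertex (say the first vertex ever created in the process), for which there are at most $|\Lambda_n|\le C_4 n^d$ choices. Third, for each $r\in R$ I would record the displacement $v_i-u_i$ of the corresponding merge; since $\|v_i-u_i\|_\infty=2r$, this vector lies on the $\ell^\infty$-sphere of radius $2r$, whose number of lattice points is $(4r+1)^d-(4r-1)^d\le d\,C_4\,r^{d-1}$.

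Next I would define the reconstruction map and check that it returns $X$. Replaying the merges in the recorded order, I maintain the current collection of blobs together with the relative positions of all the vertices inside each of them, which are already determined by the earlier merges. At the $i$-th merge, once the two blobs being joined and their touching vertices $u_i,v_i$ are identified, the recorded displacement $v_i-u_i$ pins down the position of the second blob relative to the first, hence the positions of all of its vertices. Iterating through all $k-1$ merges reconstructs $X$ up to a global translation, which the recorded reference vertex then fixes. Since every admissible $X$ arises from at least one data tuple, multiplying the three counts yields
\[
  \#\left\{X\subseteq\Lambda_n : |X|=k,\ \calR(X)=R\right\}
  \ \le\ C_4\,\calO(R)\,n^d\prod_{r\in R} d\,C_4\,r^{d-1},
\]
which is the assertion of the proposition.

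The hard part will be to show that the data above suffices, i.e.\ that no extra factor growing with $k$ is needed to specify, at the $i$-th step, which two blobs merge and which of their vertices touch. A naive bookkeeping would select an unordered pair among the roughly $k-i$ surviving blobs and then a touching vertex in each, introducing spurious factorial factors and destroying the bound. The resolution, following the argument behind \cite[Proposition 15]{SDP13}, is that the fixed ordering of $R$ together with the deterministic growth rule (each merge occurs at the first pair of blobs whose inflated copies touch, chosen in the prescribed deterministic way) singles out the pair of blobs and their touching vertices canonically, so that at each merge only the displacement on the $\ell^\infty$-sphere remains free; the combinatorial freedom in how the blobs are assembled is precisely what is absorbed by the ordering factor $\calO(R)$. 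The remaining care is routine: verifying the surface-area estimate $(4r+1)^d-(4r-1)^d\le d\,C_4\,r^{d-1}$, from which the factor $d\,r^{d-1}$ originates, and checking that the tie-breaking built into the chosen ordering makes the reconstruction well defined even when several $r_i$ coincide.
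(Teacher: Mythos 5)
Your overall counting frame (root position $\times$ an ordering of $R$ $\times$ per-merge sphere factors) has the right shape, but the reconstruction mechanism you propose has a genuine gap, and it sits exactly at the step you flag as ``the hard part''. You replay the merges in \emph{temporal} order and claim that ``the fixed ordering of $R$ together with the deterministic growth rule singles out the pair of blobs and their touching vertices canonically''. This cannot work, for two reasons. First, in the growth process the merge radii occur in nondecreasing order, so the radii read in temporal order are just the sorted rearrangement of $R$: an ordering of $R$ that is to be replayed temporally carries essentially no information, and the factor $\calO(R)$ does no encoding work in your scheme. Second, the determinism of the growth rule is a function of the absolute positions of all points, i.e.\ of the relative positions of \emph{distinct} blobs; at the moment of decoding the $i$-th merge those relative positions are precisely what has not yet been reconstructed (they are fixed only by the displacement you are about to read), so the decoder cannot evaluate the rule --- the argument is circular. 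Concretely, your code fails to be injective: take $X=\{a,b,c\}$ with first merge $\{a,b\}$ at radius $r_1$, $\delta_1=b-a$, and second merge at radius $r_2$ through the touching pair $(b,c)$, $\delta_2=c-b$ (so $\|a-c\|_\infty\ge 2r_2$); and take $X'=\{a,b,c'\}$ with the same first merge but second merge through the touching pair $(a,c')$, where $c'=a+\delta_2$ (so $\|b-c'\|_\infty\ge 2r_2$). Both are realizable growth-process outcomes, they produce identical codes (same reference vertex, same radius sequence, same displacements), yet $X\neq X'$. So a record of the attachment vertex is indispensable, and recording it naively (an index into the current blob) produces factorial-type factors that are \emph{not} bounded by $C^k\calO(R)$ when $R$ has large multiplicities (e.g.\ all radii equal gives $\calO(R)=1$, while the bound must then reduce to a lattice-animal count $Cn^d(C d r^{d-1})^{k-1}$).

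The repair --- and this is what the argument of \cite[Proposition 15]{SDP13} invoked by the paper actually amounts to --- is to forget the temporal order altogether and use only the fact that the growth process produces a tree $T_{k-1}$ on the vertex set $X$ whose edges have label multiset $R$ and whose edge labelled $r$ joins two points at $\ell^\infty$-distance exactly $2r$. One then encodes the embedded labelled tree by a depth-first traversal from a deterministically chosen root: the root position (at most $(2n+1)^d\le Cn^d$ choices), the up/down move string of the traversal (a balanced string of $2(k-1)$ moves, at most $4^{k-1}$ choices, absorbed into the per-edge constant), the edge labels read in traversal order (now a genuine, information-carrying ordering of $R$, at most $\calO(R)$ choices), and one displacement per edge on the $\ell^\infty$-sphere of radius $2r$ (at most $dCr^{d-1}$ choices, which is your surface estimate). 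Decoding is a stack walk: each down move places a new vertex at the current vertex plus the displacement, each up move pops; injectivity is immediate because the attachment vertex is always the current vertex of the traversal, which is exactly why no factorial factors beyond $\calO(R)$ appear. Multiplying the counts gives the stated bound.
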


\section{Proof of Theorem \ref{thm:main} and \ref{thm:contr}}

We start with the following consequence of Assumption (II).
\begin{lemma}[Lemma 4.4 of \cite{BCKS99}] \label{lemma:sum_pi}
If Assumption (II) holds, then there is a constant $C_{5} = C_{5}(C_{2},\alpha, d)$ such that for all $n\geq0$ we have
\begin{align}
  \sum_{k=1}^n k^{d-1}\pi(k)\leq C_5 n^d\pi(d). \label{eq:sumbound}
\end{align}  
\end{lemma}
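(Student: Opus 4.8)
The plan is to use Assumption \ref{assu:1arm} to trade each $\pi(k)$ in the sum for a multiple of $\pi(n)$, which reduces the statement to a purely arithmetic bound on a power sum. First I would rewrite \eqref{eq:exponent_bound}: applying it with the roles played by $m=k$ and the given $n$, for every $1\leq k\leq n$ we have $\pi(n)/\pi(k)\geq C_2(n/k)^{-\alpha}$, and hence
\begin{equation*}
  \pi(k)\leq \frac{1}{C_2}\left(\frac{n}{k}\right)^{\alpha}\pi(n) \qquad (1\leq k\leq n).
\end{equation*}
Substituting this into the left-hand side and pulling out the factor $C_2^{-1}n^{\alpha}\pi(n)$ yields
\begin{equation*}
  \sum_{k=1}^{n} k^{d-1}\pi(k)\leq \frac{n^{\alpha}\pi(n)}{C_2}\sum_{k=1}^{n}k^{d-1-\alpha}.
\end{equation*}

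So it remains to bound $\sum_{k=1}^{n}k^{d-1-\alpha}$ by $C(d,\alpha)\,n^{d-\alpha}$. This is exactly where the hypothesis $\alpha<d$ enters: it guarantees $d-1-\alpha>-1$, so the partial sums of $k^{d-1-\alpha}$ grow like the integral $\int_{0}^{n}x^{d-1-\alpha}\,dx$, which is of order $n^{d-\alpha}/(d-\alpha)$, rather than being dominated by a divergent tail. I would carry this out by the usual monotone integral comparison, distinguishing the two regimes: when $d-1-\alpha\geq 0$ the summand is nondecreasing and comparison from above gives $\sum_{k=1}^{n}k^{d-1-\alpha}\leq (n+1)^{d-\alpha}/(d-\alpha)$, while when $-1<d-1-\alpha<0$ the summand is nonincreasing and $\sum_{k=1}^{n}k^{d-1-\alpha}\leq 1+\int_{1}^{n}x^{d-1-\alpha}\,dx$. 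Since $n^{d-\alpha}\geq 1$ for $n\geq 1$, in both regimes the sum is at most $C(d,\alpha)\,n^{d-\alpha}$ for a constant depending only on $d$ and $\alpha$.

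Combining the two displays gives
\begin{equation*}
  \sum_{k=1}^{n}k^{d-1}\pi(k)\leq \frac{n^{\alpha}\pi(n)}{C_2}\cdot C(d,\alpha)\,n^{d-\alpha}=\frac{C(d,\alpha)}{C_2}\,n^{d}\pi(n),
\end{equation*}
so the claim holds with $C_5=C(d,\alpha)/C_2$, and one reads off the promised dependence $C_5=C_5(C_2,\alpha,d)$. (Here I read the right-hand side of \eqref{eq:sumbound} as $C_5 n^{d}\pi(n)$; the $\pi(d)$ printed in the statement is evidently a typo for $\pi(n)$.) The case $n=0$ is trivial since the sum is empty. There is no real obstacle: the only point needing any care is the integral comparison in the two sign regimes of the exponent $d-1-\alpha$, and checking that the resulting constant absorbs the boundary term $1$ uniformly in $n$, which it does because $n^{d-\alpha}\geq 1$.
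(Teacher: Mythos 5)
Your proof is correct. Note that the paper itself offers no proof of this statement: it is quoted verbatim as Lemma 4.4 of \cite{BCKS99}, so the only thing to compare against is the cited source, whose argument is essentially yours (there the power sum is handled by grouping $k$ into dyadic blocks rather than by integral comparison, but the two estimates are interchangeable). You correctly identify the two essential points: Assumption \ref{assu:1arm} rearranges to $\pi(k)\leq C_2^{-1}(n/k)^{\alpha}\pi(n)$ for $k\leq n$, and the hypothesis $\alpha<d$ makes the resulting exponent $d-1-\alpha>-1$, so $\sum_{k=1}^{n}k^{d-1-\alpha}=O(n^{d-\alpha})$ with a constant depending only on $d$ and $\alpha$; and you are right that $\pi(d)$ in \eqref{eq:sumbound} is a typo for $\pi(n)$.
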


\subsection{Proof of Theorem \ref{thm:contr}} \label{ssec:pf_contr}
Combining Proposition
\ref{prop:prob_X} and \ref{prop:num_X} with $C_6 = dC_3C_4 $  we get:
\begin{align}
 \EE\binom{|\calV_n|}{k}
 & = \sum_{X\subseteq \Lambda_n}\PP_{p_c}\left(\calV_n\supseteq X\right)  \nonumber \\
 & \leq d\sum_{R} C_3C_4 \calO(R) n^d\pi(n)\prod_{r\in R} dC_3C_4r^{d-1}\pi(r)
  \label{eq:1st_bound} \\
 & = C^k_6 n^d\pi(n) \sum_{\tilde{R}} \prod_{\tilde{r}\in\tilde{R}} \tilde{r}^{d-1}\pi(\tilde{r}) =  C^k_6 n^d\pi(n) \left(\sum_{r=1}^nr^{d-1}\pi(r)\right)^{k-1}
  \label{eq:2st_bound} 
\end{align}
where the first summation in \eqref{eq:1st_bound} runs over the $k-1$ element
mulitsets of $\{1,2,\ldots,n\}$, while in \eqref{eq:2st_bound} $\tilde{R}$ runs through the $k-1$ long sequences in $\{1,2,\ldots,n\}$.
Note that by Observation \ref{obs:num_X}, many terms in \eqref{eq:2st_bound} are redundant. We exploit this in the following.

Let $\bar{r}_i$ denote the $i$th largest element of $\tilde{R}$. Observation \ref{obs:num_X} provides an upper bound on $\EE\binom{|\calV_n|}{k}$
where in the sum in \eqref{eq:2st_bound} we restrict to the terms such that $\bar{r}_i \leq n / 2^l$ for all $i$ with $2^{dl} \leq i < 2^{d(l+1)}$.
We indicate this restriction by an additional tilde above the sum. Let $j := \lfloor\log_{2^d}(k)\rfloor$ and $m = k-1- 2^{dj}$. We arrive to the
following bound:
\begin{align}
 \EE\binom{|\calV_n|}{k} \leq
 & \; C^k_6 n^d\pi(n) \widetilde{\sum_{\tilde{R}}} \prod_{\tilde{r}\in\tilde{R}} \tilde{r}^{d-1}\pi(\tilde{r}) \nonumber\\
 \leq & \; C_6^k n^{d}\pi(n) \binom{k-1}{2^d-1, (2^d-1)2^d, \ldots,(2^d-1)2^{d(j-1)},m} \label{eq:3rd_bound}\\ 
 & \;\prod_{i=1}^{j-1}\left(\sum_{r=1}^{n/2^{i}}r^{d-1}\pi(r)\right)^{(2^d-1)2^{di}} 
\left(\sum_{r=1}^{n/2^{j-1}}r^{d-1}\pi(r)\right)^{m}.\nonumber 
\end{align}
The multinomial term in \eqref{eq:3rd_bound} bounds the number of ways we can order $k-1$ (not necessarily different) numbers when we do not
distinguish between the largest $2^d-1$, the next $(2^d-1)2^d$ largest,..., and the next $(2^d-1)``^{d(j-1)}$ largest of them.
The product terms in \eqref{eq:3rd_bound} apply the above bounds on the range of $\bar{r}_i$.
Hence by Lemma \ref{lemma:sum_pi}, we have that
\begin{align}
 \EE\binom{|\calV_n|}{k} \leq 
& \; (C_5C_6)^k n^{dk} \binom{k-1}{2^d-1, (2^d-1)2^d,
\ldots,(2^d-1)2^{d(j-1)},m} \nonumber\\
& \; 2^{-m(j-1)d}\prod_{i = 1}^ {j-1} 2^{-di(2^d-1)2^{id}} \cdot
\pi(n)\pi(n/2^{j-1})^m \prod_{i=1}^{j-1}\pi(n/2^i)^{(2^d-1)2^{di}}. \label{eq:5th_bound}
\end{align}

We estimate the multinomial, and the two product terms separately.
It is a simple computation to show that there is a constant $C_7 = C_7(d)$ 
such that
\begin{align}
 \binom{k-1}{2^d-1, (2^d-1)2^d, \ldots,(2^d-1)2^{d(j-1)},m} \leq C_7^{k-1}, \label{eq:6th_bound}
\end{align}
and that
\begin{align}
  2^{-m(j-1)d}\prod_{i = 1}^ {j-1} 2^{-di(2^d-1)2^{id}} \leq C_7^k k^{-k} \label{eq:7th_bound}
\end{align}
for all $k\geq1$.
We combine \eqref{eq:5th_bound}, \eqref{eq:6th_bound}, and \eqref{eq:7th_bound} with the trivial bound $\pi(n/\sqrt[d]{k})^{k}$
for the product of $\pi$'s, and get 
\begin{align}
   \EE\binom{|\calV_n|}{k} \leq C_{8}^k n^{kd}k^{-k}\pi(n/\sqrt[d]{k})^{k} \label{eq:8th_bound}
\end{align}
with $C_8 = C_5C_6C_7^2$. This finishes the proof of the first part of Theorem \ref{thm:contr}.
\medskip

Let us proceed to the proof of the second part. The statement is trivial for $u>n$, hence we assume $u\in[1,n]$
in the following. For $t\geq1$ by \eqref{eq:8th_bound} we get
\begin{align*}
  \EE\left(t^{|\calV_n|}\right)
  & = \sum_{k=1}^\infty (t-1)^k \binom{|\calV_n|}{k} \\
  & \leq \sum_{k=0}^\infty \left((t-1)C_8n^d\pi(n/\sqrt[d]{k})/k \right)^k.
\end{align*}
Take $t = 1 +\frac{u^d}{C_2C_8n^d\pi(n/u)}$ where $u\in[1,n]$. With Assumption (II) we get
\begin{align}
 \EE\left(t^{|\calV_n|}\right)
  & \leq \sum_{k=0}^\infty \left(\frac{u^d\pi(n/\sqrt[d]{k})}{C_2k\pi(n/u)} \right)^k \nonumber\\
  & \leq \sum_{k=0}^{C^{-1}_2u^d} \left(\frac{u^d}{C_2k}\right)^k 
  + \sum_{k=C^{-1}_2u^d+1}^\infty \left(\frac{u^d}{k}\right)^{(1-\alpha/d)k}\nonumber\\
  & \leq \sum_{k=0}^{\infty} \frac{u^{dk}}{C_2^k k!}
  + C^{-1}_2u^d\sum_{l=1}^\infty\left(l^{1-\alpha/d}\right)^{-C^{-1}_2u^dl}\nonumber\\
  & \leq
\exp(C^{-1}_2u^d)+C^{-1}_2u^d\sum_{l=1}^\infty l^{-(1-\alpha/d)l}\nonumber\\
  &\leq C_9 \exp(C^{-1}_2u^d)\label{eq:E(t^V)_upperbound}
\end{align}
for some constant $C_9 = C_9(\alpha,d)$.
Note that the function $x\rightarrow (1+x)^{1/x}$ is decreasing, and that
$\frac{u^d}{n^d\pi(n/u)} \leq C_2^{-1}\left(u/n\right)^{d-\alpha}\leq C_2^{-1}$ since $u\in[1,n]$. Hence there
is a constant $C_{10}$ such that for all $K>0$
\begin{align}
  t^{Kn^d\pi(n/u)}
  & = \left(1+\frac{u^d}{C_2C_8n^d\pi(n/u)}\right)^{Kn^d\pi(n/u)} \geq
\exp\left(C_{10}K u^d\right).\label{eq:t^slowerbound}
\end{align}
Then the Markov inequality, \eqref{eq:E(t^V)_upperbound} and \eqref{eq:t^slowerbound} with $K=2/(C_2C_{10})$ gives that
\begin{align}
   \mathbb{P}_{p_c}\left(\left|\calV_n\right|\geq \frac{2}{C_2C_{10}}n^d\pi\left(n/u\right)\right) \leq
C_9\exp\left(-u^d/C_8\right), \label{eq:almostdone}
\end{align}
From \eqref{eq:almostdone} by Assumption \ref{assu:1arm} the second part of Theorem \ref{thm:contr} follows. This finishes the proof of
Theorem \ref{thm:contr}. $\square$

\subsection{Proof of the lower bound of Theorem \ref{thm:main}} \label{ssec:pf_main}
In this section we consider the case $d=2$.

For $n,m\geq1$
let $B(n,m)$ denote the rectangle $B(n,m) := [0,n]\times [0, m] \cap \ZZ^2$. Further, let $\calH(B(n,m))$ denote the event that there
is an open path connecting $\{0\}\times [0,m]$ to $\{n\}\times[0,m]$. The notation extends to translates of $B(n,m)$ in the usual way.
Furthermore, we define the event $\calV(B(n,m))$ that there is a vertical crossing of $B(n,m)$.
The following well-known statement fist appeared in \cite{Seymour1978}, see also \cite{Russo1981}.
\begin{lemma}[RSW]\label{lemma:RSW}
  There is a positive constant $C_{11} >0$ such that for all $n\geq1$
\begin{align*}
  \PP_{p_c}(\calH(B(n,2n))) \geq e^{-C_{11}}.
\end{align*} 
\end{lemma}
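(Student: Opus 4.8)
The content of the lemma is the Russo--Seymour--Welsh crossing estimate: at $p_c$ a rectangle of fixed aspect ratio is crossed the hard way with probability bounded below, uniformly in the scale $n$. The plan is to prove this in three stages --- a square seed obtained from self-duality, a single gluing step that enlarges the aspect ratio, and a bounded iteration reaching the ratio $2$ demanded by $B(n,2n)$. Since the hard-way crossing probability is dominated by the easy-way one, establishing the hard-direction bound yields $\PP_{p_c}(\calH(B(n,2n)))\ge e^{-C_{11}}$ regardless of how the crossing direction of $B(n,2n)$ is read.

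For the seed, self-duality of the lattice at $p_c$ (the triangular lattice for site percolation, or $\ZZ^2$ for bond percolation) gives that the open horizontal crossing of a square and the dual closed crossing in the complementary direction are exchanged by a reflection of the lattice and together exhaust the configuration space up to a vanishing boundary correction. Hence $\PP_{p_c}(\calH(B(n,n)))\ge c_0$ for a universal $c_0>0$ (indeed $c_0=\tfrac12$ in the bond case), uniformly in $n$.

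The heart of the argument is the Seymour--Welsh gluing step, which enlarges the aspect ratio while keeping the crossing probability bounded below. I would take two overlapping translates of the square, say $S_1=[0,n]^2$ and $S_2=[\tfrac{n}{2},\tfrac{3n}{2}]\times[0,n]$, condition on the lowest open horizontal crossing $\ga$ of $S_1$, and reflect the portion of $\ga$ lying to the left of the midline $\{x=\tfrac{n}{2}\}$ across that line. The reflection symmetry of the lattice makes the reflected path equidistributed with the original, and the square-root trick (applied through the FKG inequality to the two symmetric continuation events) converts this symmetry into a lower bound, uniform in $n$, for the event that $\ga$ extends inside $S_1\cup S_2$ to the right edge of $S_2$. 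The FKG inequality then combines this extension with an independent horizontal crossing of $S_2$ and a vertical crossing of a square sub-block of the overlap --- each bounded below by the seed --- to complete a hard-way crossing of $S_1\cup S_2=[0,\tfrac{3n}{2}]\times[0,n]$ with probability at least some $c_1=c_1(c_0)>0$.

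Finally, a bounded number of such gluings --- each enlarging the aspect ratio by a fixed factor and costing only a constant factor in probability via FKG --- reaches aspect ratio $2$ and produces a hard-way crossing of a $2n\times n$ rectangle with probability at least a universal constant, which we record as $e^{-C_{11}}$; this bound is uniform in $n$ and gives the lemma. The main obstacle is the gluing step itself: obtaining an $n$-independent lower bound on the probability that the lowest crossing of one square extends through the overlap into its neighbour. This is exactly where the reflection symmetry invoked in Remark ii) is indispensable, and where the square-root trick must be applied to the pair of symmetric continuation events before FKG assembles the pieces. Because the target aspect ratio is the fixed value $2$, only boundedly many gluings occur and every constant produced along the way is universal.
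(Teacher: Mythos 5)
The paper offers no proof of this lemma at all---it simply cites Seymour (1978) and Russo (1981)---and your sketch is exactly the classical argument from those references (self-duality seed for the square, lowest-crossing/reflection/square-root-trick gluing step, bounded FKG iteration to aspect ratio $2$), so it is correct at the level of detail given and consistent with what the paper invokes. Your opening remark that a hard-way bound dominates the stated event is also apt, since with the paper's convention $B(n,m)=[0,n]\times[0,m]$ the event $\calH(B(n,2n))$ is literally an easy-way crossing, so your stronger hard-way estimate covers the statement regardless of how the orientation is read.
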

We say that an event $\calA$ is increasing, if $\omega\in\calA$ then $\omega'\in\calA$ for all $\omega'\in\Omega$ with $\omega'\geq \omega$, where $\geq$
is understood coordinate-wise. We recall the FKG -inequality \cite{Fortuin1971}:
\begin{lemma}(FKG)\label{lemma:FKG}
  Let $\calA,\calB$ be increasing events, then 
\begin{align*}
  \PP_{p_c}(\calA\cap \calB) \geq \PP_{p_c}(\calA)\PP_{p_c}(\calB).
\end{align*}
\end{lemma}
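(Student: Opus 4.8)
The plan is to deduce the statement from the corresponding correlation inequality for increasing functions and then to prove the latter by induction on the number of relevant coordinates. Writing $f=\ind_{\calA}$ and $g=\ind_{\calB}$, both are bounded increasing functions on $\Omega$, and the assertion $\PP_{p_c}(\calA\cap\calB)\geq\PP_{p_c}(\calA)\PP_{p_c}(\calB)$ is precisely $\EE_{p_c}[fg]\geq\EE_{p_c}[f]\,\EE_{p_c}[g]$. Hence it suffices to show that any two bounded increasing functions $f,g\colon\Omega\to\RR$ are positively correlated under the product measure $\PP_{p_c}$.

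First I would treat the case where $f,g$ depend on a single coordinate $\omega(e)$. For any two configurations $\omega,\omega'$, monotonicity forces $f(\omega)-f(\omega')$ and $g(\omega)-g(\omega')$ to have the same sign, since both are determined by the sign of $\omega(e)-\omega'(e)$; thus $(f(\omega)-f(\omega'))(g(\omega)-g(\omega'))\geq0$ always. Integrating this over two independent copies distributed as $\PP_{p_c}$ and expanding via independence yields $2\EE[fg]-2\EE[f]\EE[g]\geq0$, which is the desired inequality in this base case.

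For the inductive step, suppose the inequality holds whenever $f,g$ depend on at most $m-1$ coordinates, and let $f,g$ depend on $e_1,\dots,e_m$. Conditioning on $\omega(e_m)$, for each fixed value of this coordinate the functions $f,g$ are increasing in $\omega(e_1),\dots,\omega(e_{m-1})$, so the induction hypothesis applied to the product measure on those coordinates gives
\[
  \EE\big[fg\mid\omega(e_m)\big]\geq F\,G,\qquad
  F:=\EE\big[f\mid\omega(e_m)\big],\ \ G:=\EE\big[g\mid\omega(e_m)\big].
\]
Both $F$ and $G$ are increasing functions of the single variable $\omega(e_m)$, so applying the base case together with the tower property gives $\EE[fg]=\EE\big[\EE[fg\mid\omega(e_m)]\big]\geq\EE[FG]\geq\EE[F]\EE[G]=\EE[f]\EE[g]$, completing the induction.

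Finally I would remove the restriction to finitely many coordinates. Given a general increasing event $\calA$, let $\mathcal{F}_m$ be the $\sigma$-algebra generated by the first $m$ edges in some fixed enumeration of $E(\ZZ^d)$, and set $f_m:=\EE_{p_c}[\ind_\calA\mid\mathcal{F}_m]$; then $f_m$ is an increasing function of $\omega(e_1),\dots,\omega(e_m)$ and $f_m\to\ind_\calA$ almost surely and in $L^2$ by the martingale convergence theorem. Defining $g_m$ analogously for $\calB$, the finite-dimensional result gives $\EE[f_m g_m]\geq\EE[f_m]\EE[g_m]$, and passing to the limit yields the claim. The main obstacle is this last approximation step: one must check that conditioning on a block of coordinates preserves monotonicity and that the limits may be exchanged, whereas the combinatorial heart of the argument---the sign trick in the base case and its propagation through the induction---is entirely elementary.
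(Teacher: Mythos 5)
Your proof is correct, but note that there is nothing in the paper to compare it against: the paper does not prove this lemma, it simply recalls it as a classical fact with a citation to Fortuin--Kasteleyn--Ginibre \cite{Fortuin1971}. What you have written is the standard proof of Harris's inequality, which is exactly the case needed here since $\PP_{p_c}$ is a product measure: the two-independent-copies sign trick $(f(\omega)-f(\omega'))(g(\omega)-g(\omega'))\geq 0$ for the one-coordinate base case, induction on the number of coordinates by conditioning on the last one, and a martingale approximation to handle events depending on infinitely many edges. All three steps are sound, and the points you flag as requiring care are indeed the only ones: $F=\EE[f\mid\omega(e_m)]$ and $f_m=\EE_{p_c}[\ind_\calA\mid\mathcal{F}_m]$ are increasing precisely because, under a product measure, conditioning on a block of coordinates amounts to integrating out the others, which preserves monotonicity; and the passage to the limit in $\EE[f_m g_m]$ is justified since $f_m\to\ind_\calA$ and $g_m\to\ind_\calB$ in $L^2$ with everything bounded by $1$. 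So your argument is not a different route from the paper's proof --- the paper has none --- but rather supplies a self-contained derivation of a result the paper imports from the literature.
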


We start with the following lemma.
\begin{lemma}\label{lemma:lbound_calV}
  There are positive constants $C_{12},C_{13}$ such that for all $n\geq1$
  \begin{align*}
    \PP_{p_c}(|\calV_n| & \geq C_{12}n^2\pi(n)) \geq e^{-C_{13}}.
  \end{align*}
\end{lemma}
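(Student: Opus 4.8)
The plan is to apply the second moment method (Paley--Zygmund inequality) to the random variable
\[
  Z := \#\{v \in \Lambda_{n/2} : v \leftrightarrow \partial\Lambda_{2n}\} \leq |\calV_n|,
\]
where I restrict to vertices well inside $\Lambda_n$ so that each such $v$ sits at $\ell_\infty$-distance of order $n$ from $\partial\Lambda_{2n}$. It suffices to establish the two estimates $\EE_{p_c} Z \geq c\, n^2\pi(n)$ and $\EE_{p_c} Z^2 \leq C\,(n^2\pi(n))^2$ for all large $n$: then Paley--Zygmund gives $\PP_{p_c}(Z \geq \tfrac12\EE Z) \geq \tfrac14 (\EE Z)^2/\EE Z^2 \geq c' > 0$, and since $Z \leq |\calV_n|$ and $\EE Z \geq c\,n^2\pi(n)$, this yields the lemma with $C_{12} = c/2$ and $e^{-C_{13}} = c'/4$. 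The finitely many small values of $n$ are absorbed by shrinking $C_{12}$ and enlarging $C_{13}$, using that $\PP_{p_c}(|\calV_n|\geq 1)$ is bounded below for each fixed $n$.

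For the first moment I write $\EE_{p_c} Z = \sum_{v\in\Lambda_{n/2}} \PP_{p_c}(v\leftrightarrow\partial\Lambda_{2n})$ and use the standard fact that $\PP_{p_c}(v\leftrightarrow\partial\Lambda_{2n}) \asymp \pi(n)$ for $v$ in the bulk. The lower bound $\gtrsim \pi(n)$ follows from RSW (Lemma \ref{lemma:RSW}) and FKG (Lemma \ref{lemma:FKG}): one grows an open arm from $v$ out to scale of order $n$ and glues it to $\partial\Lambda_{2n}$ using crossings of a bounded number of overlapping rectangles of side of order $n$, each of probability bounded below, while Assumption \ref{assu:q-mult} and Assumption \ref{assu:1arm} (the latter giving $\pi(cn)\asymp\pi(n)$ at fixed scale ratios) match the scales. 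Summing over the order-$n^2$ vertices of $\Lambda_{n/2}$ gives $\EE_{p_c} Z \geq c\,n^2\pi(n)$.

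The second moment estimate is the technical heart. Here $\EE_{p_c} Z^2 = \sum_{u,v\in\Lambda_{n/2}}\PP_{p_c}(u\leftrightarrow\partial\Lambda_{2n},\, v\leftrightarrow\partial\Lambda_{2n})$, and the key input is the two-point bound
\[
  \PP_{p_c}(u\leftrightarrow\partial\Lambda_{2n},\, v\leftrightarrow\partial\Lambda_{2n}) \leq C\,\pi(r)\,\pi(n), \qquad r := \|u-v\|_\infty .
\]
I would prove this by decomposing the joint connection into three events supported on disjoint edge sets: $u\leftrightarrow\partial\Lambda_{r/3}(u)$ and $v\leftrightarrow\partial\Lambda_{r/3}(v)$, two independent one-arm events of probability $\pi(r/3)\asymp\pi(r)$ (Assumption \ref{assu:1arm}), and a crossing of an annulus from scale of order $r$ to scale of order $n$ surrounding the pair, of probability $\asymp\pi(r,n)$; the joint event forces all three, since each arm must traverse its small ball and the separating annulus. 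Multiplying and invoking quasi-multiplicativity in the form $\pi(r)\pi(r,n)\leq C_1\pi(n)$ (Assumption \ref{assu:q-mult} with $k=1,\,l=r,\,m=n$) gives $\pi(r)^2\pi(r,n)\le C\pi(r)\pi(n)$. Then Lemma \ref{lemma:sum_pi} with $d=2$ yields $\sum_{v\in\Lambda_{n/2}}\pi(\|u-v\|_\infty) \leq C\sum_{r=1}^{Cn} r\,\pi(r) \leq C'\,n^2\pi(n)$, so $\EE_{p_c} Z^2 \leq C\pi(n)\cdot|\Lambda_{n/2}|\cdot n^2\pi(n) \leq C''(n^2\pi(n))^2$; the diagonal term $u=v$ contributes $\EE_{p_c} Z \asymp n^2\pi(n)$, negligible against $(n^2\pi(n))^2$ because Assumption \ref{assu:1arm} (with $\alpha<2$) forces $n^2\pi(n)\to\infty$.

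I expect the main obstacle to be the careful geometry in the two-point estimate: choosing the separating annuli so that a connection to the \emph{fixed} boundary $\partial\Lambda_{2n}$ genuinely forces an annulus crossing up to scale of order $n$ (the boxes $\Lambda_{2n}(w)$ for $w$ near the edge of $\Lambda_{n/2}$ are not concentric with $\Lambda_{2n}$), and tracking the constant-factor changes of scale throughout via Assumption \ref{assu:1arm}. These steps are routine in two-dimensional percolation but must be carried out with some care.
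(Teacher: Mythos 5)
Your proof is correct, but it takes a genuinely different route from the paper's. The paper's proof is two lines: it notes that for every $v\in\Lambda_n$ one has $\Lambda_{2n}\subseteq\Lambda_{3n}(v)$, so $\PP_{p_c}(v\leftrightarrow\partial\Lambda_{2n})\geq\pi(3n)$, whence $\EE_{p_c}|\calV_n|\geq n^2\pi(3n)\geq C_2 3^{-\alpha}n^2\pi(n)$ by Assumption \ref{assu:1arm}; it then combines this first-moment lower bound with Theorem \ref{thm:contr} (whose moment bound at $k=2$ gives $\EE_{p_c}|\calV_n|^2\leq C(n^2\pi(n))^2$, and whose tail bound gives even stronger truncation control), which is exactly the Paley--Zygmund input. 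So the skeleton (second moment method) is the same, but the key concentration input differs: the paper imports it wholesale from its already-proven main theorem, while you rebuild it from scratch via the two-point estimate $\PP_{p_c}(u\leftrightarrow\partial\Lambda_{2n},\,v\leftrightarrow\partial\Lambda_{2n})\leq C\pi(\|u-v\|_\infty)\pi(n)$ (independence on disjoint regions plus quasi-multiplicativity, then Lemma \ref{lemma:sum_pi}). Your version is self-contained and needs much less than the full strength of Theorem \ref{thm:contr} --- your two-point bound is essentially the $k=2$ case of Proposition \ref{prop:prob_X} --- which would matter if one wanted the lemma independently; in the context of the paper it duplicates machinery that is already available, which is presumably why the author reuses Theorem \ref{thm:contr}. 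One further simplification you could adopt from the paper: your first-moment step does not need RSW/FKG gluing at all, since monotonicity ($\Lambda_{2n}\subseteq\Lambda_{3n}(v)$) together with Assumption \ref{assu:1arm} already gives $\PP_{p_c}(v\leftrightarrow\partial\Lambda_{2n})\geq C_2 3^{-\alpha}\pi(n)$ for every $v\in\Lambda_n$, with no restriction to $\Lambda_{n/2}$ needed.
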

\begin{proof}[Proof of Lemma \ref{lemma:lbound_calV}]
  Simple computation gives that
  \begin{align*}
    \EE_{p_c}(|\calV_n|)\geq n^2\pi(3n)\geq C_2 3^{-\alpha}n^2\pi(n).
  \end{align*}
  This combined with Theorem \ref{thm:contr} provides the desired constants $C_{12}$ and $C_{13}$.
\end{proof}

Now we proceed to the proof of the lower bound in Theorem \ref{thm:main}. 
\begin{proof}[Proof of the lower bound in Theorem \ref{thm:main}]
For $v\in \ZZ^2$, we set $B(v;n,m) := B(n,m)+v$, and 
\begin{align*}
  \calV_n(v) := \left\{w\in \Lambda_n(v)\, | \, w\leftrightarrow \partial \Lambda_{2n}(v)\right\}
\end{align*}

Note that it is enough to prove \eqref{eq:main_lbound} when $u$ is an integer in $[2,n]$. We set $n' = \lfloor n/u\rfloor$.
Let $\calD_n(u)$ denote the event
\begin{align*}
  \calD_n(u) := \bigcap_{v\in \Ball_{u}} \calH\left(B\left(n'v;n',2n'\right)\right)\cap
  \calV\left(B\left(n'v;2n',n'\right)\right).
\end{align*}
It is easy to check that on the event $\calD_n(u)$, all the vertices $w\in \Ball_{n-n'}$
with $w\leftrightarrow \partial\Ball_{2n'}(w)$ belong to the same cluster. In particular, on $\calD_n(u)$ we have
\begin{align} \label{eq:pf_lower_1}
  \sum_{v\in\Ball_{u-1}} \left|\calV_{n'}\left(n'v\right)\right| \leq |\calC_n^{(1)}|.
\end{align}

Lemma \ref{lemma:RSW} and \ref{lemma:FKG} gives that
\begin{align} \label{eq:pf_lower_2}
  \PP_{p_c}(\calD_n(u))\geq e^{-C_{11} 2u^2}.
\end{align}
Combination of \eqref{eq:pf_lower_1}, \eqref{eq:pf_lower_2} and Lemma \ref{lemma:FKG} gives that for $C_{12}>0$ as in Lemma \ref{lemma:lbound_calV} we have
\begin{align}
  \PP_{p_c}\left(|\calC_n|^{(1)}\geq \frac{C_{12}}{2}n^2\pi(n/ \right.&\! u)\Bigg) \nonumber\\
    &\geq \PP_{p_c}\left(\calD_n(u),\sum_{v\in\Ball_{u-1}} \left|\calV_{n'}\left(n'v\right)\right|\geq \frac{C_{12}}{2}n^2\pi(n/u)\right) \nonumber\\
    &\geq e^{-2C_{10} u^2} \PP_{p_c}\left(\sum_{v\in\Ball_{u-1}} \left|\calV_{n'}\left(n'v\right)\right|\geq \frac{C_{12}}{2}n^2\pi(n/u)\right) \label{eq:pf_lbound}\\
    &\geq e^{-2C_{11} u^2} \PP_{p_c}\left(\calV_{n'}\geq C_{11}n'^2\pi(n')\right)^{u^2} \nonumber\\
    &\geq e^{-(2C_{11}+C_{13}) u^2}.\label{eq:pf_lbound_1}
\end{align}
Above we used Lemma \ref{lemma:lbound_calV} in \eqref{eq:pf_lbound} and in \eqref{eq:pf_lbound_1}. Simple application of Assumption
\ref{assu:1arm} finishes the proof of the lower bound of Theorem \ref{thm:main}.
\end{proof}

\bibliographystyle{amsalpha}
\bibliography{myreflist}
 
\end{document}